\keywords{Categorical Semantics, Type Theory, Univalence Axiom}
\theoremstyle{plain}
\newtheorem{lemma}[thm]{Lemma}
\newtheorem{corollary}[thm]{Corollary}
\newtheorem{problem}[thm]{Problem}
\theoremstyle{definition}
\newtheorem{definition}[thm]{Definition}
\newtheorem{example}[thm]{Example}
\theoremstyle{remark}
\newtheorem*{remark}{Remark}
\theoremstyle{definition}
\newtheorem{constrInternal}[thm]{Construction}
\newenvironment{construction}[2][]
  {\pushQED{\qed}\begin{constrInternal}[{for Problem~\ref{#2}\ifthenelse{\isempty{#1}}{}{; #1}}]}
  {\popQED\end{constrInternal}}
\newcounter{saveenumi}
\newcommand{\saveitem}{\setcounter{saveenumi}{\value{enumi}}}
\newcommand{\restoreitem}{\setcounter{enumi}{\value{saveenumi}}}
\newcommand{\grants}[1]{\newcommand{\thegrants}{#1}}
\title{Categorical structures for type theory in univalent foundations}
\newcommand{\C}{{\mathcal{C}}}
\newcommand{\D}{{\mathcal{D}}}
\newcommand{\ES}{\mathcal{S}}
\newcommand{\A}{{\mathcal{A}}}
\newcommand{\X}{{\mathcal{X}}}
\newcommand{\constfont}[1]{\ensuremath{\mathsf{#1}}}
\newcommand{\Ty}{\constfont{Ty}}
\newcommand{\Tm}{\constfont{Tm}}
\newcommand{\op}{^{\mathrm{op}}}
\newcommand{\set}{\ensuremath{\constfont{Set}}\xspace}
\newcommand{\Fam}{\ensuremath{\constfont{Fam}}\xspace}
\newcommand{\p}{\constfont{p}}
\newcommand{\Tymap}[1]{\ensuremath{\Ty({#1})}}
\newcommand{\Tmmap}[1]{\ensuremath{\Tm({#1})}}
\newcommand{\compext}[2]{\ensuremath{{#1}.{#2}}}
\newcommand{\compextcompare}[1][]{\Delta_{#1}}
\newcommand{\deppr}[1]{\constfont{\pi}_{#1}}
\newcommand{\te}{\ensuremath{\constfont{te}}}
\newcommand{\yofunctor}[1][]{\constfont{y}_{#1}}
\newcommand{\yotranspose}[1]{\widehat{#1}}
\newcommand{\q}{\constfont{q}}
\newcommand{\qmap}[2]{\ensuremath{\q({#1},{#2})}}
\newcommand{\idtoiso}{\ensuremath{\mathsf{idtoiso}}\xspace}
\newcommand{\iso}{\cong} 
\newcommand{\Iso}{\constfont{Iso}} 
\newcommand{\mapfunc}[1]{\ensuremath{\mathsf{ap}_{#1}}\xspace} 
\newcommand{\compose}[2]{\ensuremath{{#1}\cdot{#2}}}
\newcommand{\reindex}[2]{\ensuremath{#1^*{#2}}}
\newcommand{\U}{\constfont{U}}
\newcommand{\tU}{\constfont{\tilde{U}}}
\newcommand{\Jpb}[2]{\ensuremath{{#1}.{#2}}}
\newcommand{\Jpr}[1]{\ensuremath{\p_{#1}}}
\newcommand{\JQ}[2]{\ensuremath{\constfont{Q}({#1},{#2})}}
\newcommand{\RC}[1]{\ensuremath{\constfont{RC}({#1})}}
\newcommand{\preShv}[1]{\ensuremath{\constfont{PreShv}({#1})}}
\newcommand{\stype}{\constfont{splty}}
\newcommand{\cwf}{\constfont{cwf}}
\newcommand{\rep}{\constfont{rep}}
\newcommand{\relu}{\constfont{relu}}
\newcommand{\relwku}{\constfont{relwku}}
\newcommand{\compat}{\constfont{compat}}
\newcommand{\qmor}{\constfont{qmor}}
\newcommand{\tmstr}{\constfont{tmstr}}
\newcommand{\objext}{\constfont{objext}}
\newcommand{\diag}[1]{\delta_{#1}}
\newcommand{\coqident}[1]{\nolinkurl{#1}} 
\newcommand{\UniMath}{\href{https://github.com/UniMath/UniMath}{\nolinkurl{UniMath}}\xspace}
\newcommand{\pb}{\ar@{}[dr]|<<{\text{\pigpenfont A}}}
\begin{document}

\author{Benedikt Ahrens} 
\address{School of Computer Science, University of Birmingham, United Kingdom}
\email{b.ahrens@cs.bham.ac.uk}
\author{Peter LeFanu Lumsdaine} 
\address{Department of Mathematics, Stockholm University, Sweden}
\email{p.l.lumsdaine@math.su.se}
\author{Vladimir Voevodsky}
\address{Institute for Advanced Study, Princeton, NJ, USA}
\email{vladimir@ias.edu}

\maketitle

\begin{abstract}
  In this paper, we analyze and compare three of the many algebraic structures that have been used for modeling dependent type theories:
  \emph{categories with families}, \emph{split type-categories}, and \emph{representable maps of presheaves}.
  We study these in univalent type theory, where the comparisons between them can be given more elementarily than in set-theoretic foundations.
  Specifically, we construct maps between the various types of structures, and show that assuming the Univalence axiom, some of the comparisons are equivalences.

  We then analyze how these structures transfer along (weak and strong) equivalences of categories, and, in particular,
  show how they descend from a category (not assumed univalent/saturated) to its Rezk completion.
  To this end, we introduce \emph{relative universes}, generalizing the preceding notions,
  and study the transfer of such relative universes along suitable structure.

  We work throughout in (intensional) dependent type theory; some results, but not all, assume the univalence axiom. All the material of this paper has been formalized in Coq, over the \UniMath library.

\end{abstract}

\setcounter{tocdepth}{1}
\tableofcontents
~ 

\section{Introduction}

Various kinds of categorical structures have been introduced in which to interpret type theories:
for instance, categories with families, C-systems, categories with attributes, and so on.
The aim of these structures is to encompass and abstract away the structural rules of
type theories---weakening, substitution, and so on---that are
independent of the specific logical type and term formers of the
type theory under consideration.

For a given kind of categorical structures and a given type theory one expects furthermore to be able to build, from the syntax,
an initial object of a category or 2-category whose objects are categorical structures of this kind
equipped with suitable extra operations ‘modeling’ the type and term formers of the
type theory under consideration.
This universal property then provides a means to construct interpretations of
the syntax, by assembling the objects of the desired interpretation into another instance of such a structure.

It is natural to ask if, and in what sense, these various
categorical structures are equivalent, or otherwise related.
The equivalences, differences, and
comparisons between them are often said to be well-known, but few
precise statements exist in the literature.

The goals of the present work are twofold. Firstly, to give some such
comparisons precisely and carefully.  And secondly, to illustrate how in
univalent foundations, such comparisons can be approached in a different
and arguably more straightforward fashion than in classical settings.

More specifically, the paper falls into two main parts:
\begin{itemize}
\item comparison between categories with families (CwF’s) and split type-categories;
\item interaction between CwF structures and Rezk completion of categories,
  and comparison between CwF structures and representable maps of presheaves, all via comparison with \emph{relative universes}
  (introduced in the present work).
\end{itemize}

Our constructions and equivalences may be summed up in the following diagram:
\[
   \xymatrix@C=2pc{
                 \stype(\C) \ar@{<->}[rr]^-{\simeq}
                 & & \cwf(\C) \ar[d] \ar@{<->}[r]^-{\simeq}
                 & \relu(\yofunctor[\C])  \ar[r] \ar[d]
                 & \relu(\yofunctor[\RC{\C}])\ar@{<->}[r]^-{\simeq} \ar@{<->}[d]^-{\simeq}
                 & \cwf(\RC{\C}) \ar@{<->}[d]^-{\simeq}
                 \\
                 & & \rep(\C) \ar@{<->}[r]^-{\simeq}
                 & \relwku(\yofunctor[\C]) \ar@{<->}[r]^-{\simeq}
                 & \relwku(\yofunctor[\RC{\C}])    \ar@{<->}[r]^-{\simeq}
                 &  \rep (\RC{\C})
   }
\]

All proofs and constructions of the article have been formalized in Coq, over the \UniMath library.
We take this as licence to err on the side of indulgence in focusing on the key ideas of constructions and
suppressing less-enlightening technical details, for which we refer readers to the formalization.
An overview the formalization is given in Section~\ref{sec:formalization}.
Throughout, we annotate results with their corresponding identifier in the formalization, in \coqident{teletype_font}.

\subsection*{Publication history}

A preliminary version of this article was published in the proceedings of Computer Science Logic 2017 \cite{alv1-csl}.
Changes from that version include:
\begin{itemize}
\item In Definition~\ref{def:rel.univ.struct}, of universes relative to a functor, we add here a restriction to fully faithful functors, and a discussion of the issues involved for general functors.
\item The addition of Example~\ref{ex:no-bijec-in-zf}, showing explicitly how some of the equivalences constructed assuming the Univalence Axiom may fail in its absence.
\item Various expository improvements.
\end{itemize}

Sadly the third-named author, Vladimir Voevodsky, passed away before the preparation of the present extended version.
The remaining authors are grateful to Daniel R.~Grayson, Vladimir's academic executor, for his advice and support in preparing the revisions for this version.

\section{Background}

\subsection{Categorical structures for type theory}\label{sec:cat_structures}

In this short section we briefly review some of the various categorical structures for dependent type theory introduced in the literature.
We do not aim to give a comprehensive survey of the field, but just to recall what pertains to the present paper.

The first introduced were Cartmell's \emph{contextual categories} \cite{Cartmell0,DBLP:journals/apal/Cartmell86}, since studied by Voevodsky under the name of \emph{C-systems} \cite{Csubsystems,Cfromauniverse}.

Pitts \cite[Def.~6.3.3]{PittsAM:catl} introduced \emph{split type-categories}, originally just as \emph{type-categories};
Van den Berg and Garner \cite[Def.~2.2.1]{vandenBerg:2012:TSM:2071368.2071371} (whom we follow here) later reorganized Pitts' definition to isolate the splitness conditions, and hence allow what they call \emph{(non-split) type-categories}.
These have also been studied as \emph{categories with attributes} by Hofmann \cite{Hofmann97syntaxand} and others.%
\footnote{Cartmell originally used the name \emph{categories with attributes} for a slightly different notion \cite[\textsection 3.2]{Cartmell0}.}%
\footnote{Here and elsewhere, when we consider two notions of structure as the same without further justification, we mean that they are extremely trivially the same: that is, just up to reordering of components, distribution of $\Pi$-types over $\Sigma$-types, and similar mathematically negligible differences.}

\emph{Categories with families} were defined by Dybjer \cite[Def.~1]{Dybjer} to make explicit the data of \emph{terms}, not taken as primary in the previous approaches.
A functional variant of Dybjer's definition, which we follow in the present paper, was suggested by Fiore \cite[Appendix]{fiore_cwf},
and studied under the name \emph{natural models} by Awodey \cite[Def.~1]{awodey_natural_published} (see also footnote to Def.~\ref{def:rep_map} below).

The notions of \emph{universes} and \emph{universe categories}, which we generalize in the present work to \emph{universes relative to a functor},
were introduced by Voevodsky in \cite[Def.~2.1]{Cfromauniverse}.

\subsection{The agnostic, univalent, and classical settings} \label{sec:background-type-theory}

The background setting of the present work is intensional type theory, assuming throughout:
$\Sigma$-types, with the strong $\eta$ rule;
identity types;
$\Pi$-types, also with $\eta$, and functional extensionality;
$\constfont{0}$, $\constfont{1}$, $\constfont{2}$, and $\constfont{N}$;
propositional truncation (axiomatized as in \cite[\textsection 3.7]{HoTTbook}); and
two universes closed under all these constructions.

All the above is \emph{agnostic} about equality on types---it is not assumed either to be univalent, or to be always a proposition---and hence is expected to be compatible with the interpretation of types as classical sets, as well as homotopical interpretations such as the simplicial set model.
In particular, our main definitions of categorical structures use only this background theory, and under the classical interpretation they become the established definitions from the literature.
Similarly, most of the comparison maps we construct rely only on this, so can be understood in the classical setting.

Other results, however---essentially, all non-trivial equivalences of types we prove---assume additionally the univalence axiom; these therefore hold only in the univalent setting, and are not compatible with the classical interpretation.

We mostly follow the type-theoretic vocabulary standardized in \cite{HoTTbook}.
A brief, but sufficient, overview is given in \cite{rezk_completion}, among other places.
By $\set$ we denote the category of h-sets of a fixed, but unspecified universe.

We depart from it (and other type-theoretic traditions) in using \emph{existence} for what is called \emph{mere existence} in \cite{HoTTbook} and \emph{weak existence} by Howard~\cite{Howard80}, since this is what corresponds to the standard mathematical usage of \emph{existence}.

\subsection{Comparing structures in the univalent setting}

Suppose one wishes to show that two kinds of mathematical object are equivalent---say, one-object groupoids and groups.
What precise statement should one aim for?

In the classical setting, the most obvious candidate---a bijection of the sets (or classes) of these objects---is not at all satisfactory.
On the one hand, the natural back-and-forth constructions may well fail to form a bijection.
On the other hand, the axiom of choice may imply that bijections exist even when the objects involved are quite unrelated.

To give a more meaningful statement, one may define suitable morphisms, corral the objects into two categories, and construct an equivalence of categories between these.
Sometimes, one needs to go further, and construct an equivalence of higher categories, or spaces.

In the univalent setting, however, life is simpler.
The most straightforward candidate, an equivalence of \emph{types} between the types of the two kinds of objects, is already quite satisfactory and meaningful, corresponding roughly to an equivalence between the \emph{groupoids} of such objects in the classical setting (or higher groupoids, etc.).

This is not to dismiss the value given by constructing (say) an equivalence of categories
However, defining the morphisms and so on is no longer \emph{required} in order to give a meaningful equivalence between the two kinds of objects.%

Another advantage of a comparison in terms of equivalence of types is its \emph{uniformity}.
Indeed, the one notion of equivalence of types can serve to compare objects that naturally form
the elements of sets, or the objects of categories, or bicategories, etc.

In the present paper, therefore, we take advantage of this: two of our main results are such equivalences, between the types of categories with families and of split type-categories, and between the types of representable maps of presheaves on a category $\C$, and of CwF structures on its Rezk completion.

(We have focused here on equivalences, but the principle applies equally for other comparisons: for instance, an injection of types carries more information than an injection of sets/classes, corresponding roughly to a full and faithful map of (possibly higher) groupoids.)

\subsection{Categories in the univalent setting}

The fundamentals of category theory were transferred to the univalent setting in \cite{rezk_completion}.
Two primary notions of category are defined, there called \emph{precategories} and \emph{categories}.
We change terminology, calling their precategories \emph{categories} (since it is this that becomes the traditional definition under the set interpretation), and their categories \emph{univalent categories}.

Specifically, a \emph{category} $\C$ (in our terminology) consists of:
\begin{itemize}
 \item a type $\C_0$, its \emph{objects};
 \item for each $a, b : \C_0$, a set $\C(a,b)$, the \emph{morphisms} or \emph{maps} from $a$ to $b$;
 \item together with identity and composition operations satisfying the usual axioms.
\end{itemize}
We emphasize that the hom-sets $\C(a,b)$ are required to be sets, but $\C_0$ is allowed to be an arbitrary type.

In any category $\C$, there is a canonical map from equalities of objects to isomorphisms, $\idtoiso_{a,b} : (a =_{\C_0} b) \to \Iso_{\C}(a,b)$.
We say that $\C$ is \emph{univalent} if for all $a, b : \C_0$, this map $\idtoiso_{a,b}$ is an equivalence: informally, if ‘equality of objects is isomorphism’.

A central example is the category $\set$ of sets (in some universe).
Univalence of this category follows directly from the univalence axiom for the corresponding universe.
It follows in turn that $\preShv{\C}$, the category of presheaves on a category $\C$, is also always univalent.
We write $\yofunctor[\C] : \C \to \preShv{\C}$ for the Yoneda embedding.

In properties of functors, we distinguish carefully between existence properties and chosen data.
We say a functor $F : \C \to \D$ is \emph{essentially surjective} if for each $d : \D_0$, there exist some $c : \C_0$ and isomorphism $i : F c \iso d$, and is \emph{split essentially surjective} if it is equipped with an operation giving, for each $d : \D_0$, some such $c$ and $i$.
A \emph{weak equivalence} is a functor that is full, faithful, and essentially surjective.

An important construction from \cite{rezk_completion} is the \emph{Rezk completion} $\RC{\C}$ of a category $\C$, the ‘free’ univalent category
on $\C$.
Precisely, $\RC{\C}$ is univalent, and there is a weak equivalence $\eta_{\C} : \C \to \RC{\C}$, enjoying the expected universal property:
any functor from $\C$ to a univalent category factorizes uniquely through $\eta_{\C}$.
 \[
      \begin{xy}
        \xymatrix{
                  \C \ar[d]_{\eta} \ar[rd]^{F} & \\
                  \RC{\C} \ar@{-->}[r]_{\exists!} & **[r] \D \text{ univalent}
         }
      \end{xy}
  \]
Note that the main definitions make sense in the agnostic background setting, and under the classical interpretation become the standard definitions; but the Rezk completion construction and the univalence of $\set$ and $\preShv{\C}$ rely additionally on the univalence axiom.

Indeed, \cite{rezk_completion} additionally assumes \emph{resizing axioms}, in order to show that $\RC{\C}$ is no larger than $\C$
That is, a priori, the type $\RC{\C}_0$ lives in a higher universe than the type $\C_0$.
However, for the present paper, this size issue is not a problem; so we do not need to assume resizing rules.

As usual, we will write $f : a \to b$ for $f : \C(a,b)$ in arbitrary categories, and will write $c : \C$ rather than $c : \C_0$.
We write composition in the ‘diagrammatic’ order; that is, the composite of $f : a \to b$ and $g : b \to c$ is denoted $\compose{f}{g} : a \to c$.

\section{Equivalence between categories with families and split type-categories}\label{sec:cwf-to-stc}

\subsection{CwF’s and type-categories}

For the most part, we take care to follow established definitions closely.
We depart from most literature however in one way: we do not take CwF's or type-categories (or other similar structures) to include a terminal object.
This does not interact in any way with the rest of the structure, so does not affect the equivalences we construct below.
We do this since both versions (with and without terminal object) seem useful for the development of the theory; and it is easier to equip objects with extra structure later than to remove it.

\begin{definition}[{cf. \cite[Appendix]{fiore_cwf}}; {\coqident{cwf_structure}}, {\coqident{cwf}}]\label{def:cwf.fiore}
  A \emph{category with families (à la Fiore)} consists of:
  \begin{enumerate} \setcounter{enumi}{-1}
  \item a category $\C$, together with
  \item presheaves $\Ty, \Tm : \C\op \to \set$; \label{item:cwf.presheaves}
  \item a natural transformation $\p : \Tm \to \Ty$; and
  \item  for each object $\Gamma : \C$ and $A : \Tymap{\Gamma}$, a representation of the fiber of $\p$ over $A$, i.e. \label{item:cwf.fiber-rep}
    \begin{enumerate}
    \item an object $\compext{\Gamma}{A} : \C$ and map $\deppr{A} : \compext{\Gamma}{A} \to \Gamma$, \label{item:cwf.first}
    \item an element $\te_A:\Tmmap{\compext{\Gamma}{A}}$, such that $\p_{\compext{\Gamma}{A}}(\te_A) = \reindex{\deppr{A}}{A} : \Tymap{\compext{\Gamma}{A}},$
    \item and such that the induced commutative square \label{item:cwf.last}
      \[
      \begin{xy}
        \xymatrix{
                **[l]\yofunctor(\compext{\Gamma}{A})  \ar[r]^-{\yotranspose{\te_A}} \ar[d]_{\yofunctor(\deppr{A})}  \pb & \Tm \ar[d]^{\p}\\
                **[l]\yofunctor(\Gamma) \ar[r]_{\yotranspose{A}}& \Ty
              }
      \end{xy}
      \]
      is a pullback.
      (Here e.g.\ $\yotranspose{A}$ denotes the Yoneda transpose of an element of a presheaf.)
    \end{enumerate}
  \end{enumerate}
  By a \emph{CwF structure} on a category $\C$, we mean the data of items \ref{item:cwf.presheaves}--\ref{item:cwf.fiber-rep} above.
\end{definition}

\begin{remark}
  This is a reformulation, due to Fiore, of Dybjer’s original definition of CwF’s \cite[Def.~1]{Dybjer}, replacing the single functor $\C \to \Fam$ by the map of presheaves $\p : \Tm \to \Ty$.
\end{remark}

\begin{definition}
  Let $\C$ be a CwF, $\Gamma : \C$ an object, and $e : A = B$ an equality of elements of $\Tymap{\Gamma}$.
  We write $\compextcompare[e] : \compext{\Gamma}{A} \iso \compext{\Gamma}{B}$ for the induced isomorphism $\idtoiso(\mapfunc{x\mapsto \compext{\Gamma}{x}}(e))$.
\end{definition}

Since $\Tymap{\Gamma}$ is a set, we will sometimes suppress $e$ and write just $\compextcompare[A,B]$.
We will also use this notation in other situations with a family $\Ty$ and operation $\Gamma, A \mapsto \compext{\Gamma}{A}$ as in a CwF.

\begin{definition}[{cf.\ \cite[Def.~6.3.3]{PittsAM:catl}},{\cite[Def.~2.2.1]{vandenBerg:2012:TSM:2071368.2071371}}; \coqident{typecat_structure}, \coqident{is_split_}\allowbreak\coqident{typecat}, \coqident{split_typecat_structure}]\label{def:split.type.cat}

   A \emph{type-category} consists of:
  \begin{enumerate} \setcounter{enumi}{-1}
  \item a category $\C$, together with
  \item for each object $\Gamma : \C$, a type $\Tymap{\Gamma}$,
  \item for each $\Gamma : \C$ and $A : \Tymap{\Gamma}$, an object $\compext{\Gamma}{A} : \C$,
  \item for each such $\Gamma$, $A$, a morphism $\deppr{A} : \compext{\Gamma}{A} \to \Gamma$,
  \item for each map $f : \Gamma' \to \Gamma$, a ‘reindexing’ function $\Tymap{\Gamma} \to \Tymap{\Gamma'}$, denoted $A \mapsto \reindex{f}{A} $,
  \item for each $\Gamma$, $A : \Tymap{\Gamma}$, and $f : \Gamma' \to \Gamma$, a morphism
         $\qmap{f}{A} : \compext{\Gamma'}{\reindex{f}{A}} \to \compext{\Gamma}{A}$,
  \item such that for each such $\Gamma$, $A$, $\Gamma'$, $f$, the following square commutes and is a pullback:
         \[
           \xymatrix@C=5pc{
                        \compext{\Gamma'}{\reindex{f}{A}} \ar[r]^{\qmap{f}{A}} \ar[d]_{\deppr{\reindex{f}{A}}} \pb &
                                                                        \compext{\Gamma}{A} \ar[d]^{\deppr{A}} \\
                        \Gamma' \ar[r]_{f}& \Gamma
           }
         \]
  \saveitem
  \end{enumerate}

  \noindent A type-category is \emph{split} if:
  \begin{enumerate} \restoreitem
   \item for each $\Gamma$, the type $\Tymap{\Gamma}$ is a set;
   \item for each $\Gamma$ and $A : \Tymap{\Gamma}$, we have equalities
     \begin{enumerate}
     \item $e : \reindex{1_{\Gamma}}{A} = A$, and
     \item $\qmap{1_{\Gamma}}{A} = \compextcompare[e]\ :\ \compext{\Gamma}{\reindex{1_\Gamma}{A}} \to \compext{\Gamma}{A}$; and
     \end{enumerate}
   \item for $f' : \Gamma'' \to \Gamma')$, $f : \Gamma' \to \Gamma$, and $A : \Tymap{\Gamma}$, we have equalities
     \begin{enumerate}
     \item $e' : \reindex{(\compose{f'}{f})}{A} = \reindex{f'}{\reindex{f}{A}}$, and
     \item $\qmap{\compose{f'}{f}}{A}
       = \compose{\compose{\compextcompare[e']}{\qmap{f'}{\reindex{f}{A}}}} {\qmap{f}{A}}
       \ :\ \compext{\Gamma''}{\reindex{(\compose{f'}{f})}{A}} \to \compext{\Gamma}{A}.$
     \end{enumerate}
  \end{enumerate}
 \end{definition}

 In the present work, we will only consider split type-categories.
 Non-split type-categories are however also of great importance, especially in the agnostic/univalent settings, since classical methods for constructing split ones may no longer work.

\subsection{Equivalence between CwF’s and split type-categories}

The main goal of this section is to construct an equivalence of types between the type of CwF’s and the type
of split type-categories.  In outline, we proceed as follows:

Firstly, we specialize to giving an equivalence between CwF structures and split type-category structures over a fixed base category $\C$.

Secondly, we further abstract out the shared part of these, decomposing them into
 \begin{itemize}
  \item \emph{object extension structures}, the shared structure common to CwF’s and split type-categories, and
  \item structures comprising the remaining data of CwF structures and split type-category structures, which we call \emph{term-structures} and \emph{$\q$-morphism structures} respectively.
 \end{itemize}

Finally, we give an equivalence between term-structures and $\q$-morphism structures over a given category and object extension structure.
We do this by defining a \emph{compatibility} relation between them, and showing that for each term-structure there exists a unique compatible $\q$-morphism structure, and vice versa.
\[ \xymatrix@C=0.5 pc@R=3ex{
   & & & & \displaystyle \sum_{X,Y,Z} \compat_X(Y,Z) \ar@{<->}[ld]_-{\simeq} \ar@{<->}[rd]^-{\simeq}
   \\
   \stype(\C) \ar@{<->}[rrr]^-{\simeq}
   & & & \displaystyle \sum_{X : \objext(\C)} \!\!\!\! \qmor(X) \ar[dr]
   &
   & \displaystyle \sum_{X : \objext(\C)} \!\!\!\! \tmstr(X) \ar[dl] \ar@{<->}[rrr]^-{\simeq}
   & & & \cwf(\C)
   \\
   & & & & \objext(\C)
} \]

 \begin{definition}[\coqident{obj_ext_structure}]
  A \emph{(split) object extension structure} on a category $\C$ consists of:
  \begin{enumerate}
   \item a functor $\Ty : \C\op \to \set$;
   \item for each $\Gamma : \C$ and $A : \Tymap{\Gamma}$,
      \begin{enumerate}
       \item an object $\compext{\Gamma}{A}$, and
       \item a \emph{projection} morphism $\deppr{A} : \C(\compext{\Gamma}{A}, \Gamma)$.
      \end{enumerate}
  \end{enumerate}
  In general, one might want to loosen the setness and functoriality conditions on $\Ty$, and hence distinguish the present definition as the split version of a more general notion.
  In this paper, however, we do not consider the non-split case, so take \emph{object extension structures} to mean the split ones throughout.
 \end{definition}

\begin{definition}[\coqident{term_fun_structure}]\label{def:cartesian.generator}
  Let $\C$ be a category equipped with an object extension structure $X = (\Ty,\compext{-}{-},\pi)$.
  A \emph{(functional) term-structure} over $X$ consists of:
 \begin{enumerate}
  \item a presheaf $\Tm : \C\op \to \set$, and natural transformation $\p : \Tm \to \Ty$;
  \item  for each object $\Gamma : \C$ and $A : \Tymap{\Gamma}$, an element $\te_A:\Tmmap{\compext{\Gamma}{A}}$, such that $(\compext{\Gamma}{A}, \deppr{A}, \te_A)$ form a representation of the fiber of $\p$ over $A$ as in item \ref{item:cwf.fiber-rep} of Def.~\ref{def:cwf.fiore}.
 \end{enumerate}
 One might say \emph{functional} to distinguish these from \emph{familial} term-structures, which would correspond analogously to CwF’s in the sense of Dybjer, with $\Tmmap{\Gamma}$ a family indexed by $\Tymap{\Gamma}$.
 For the present paper, however, we work only with the functional ones, so call these simply \emph{term-structures}.
\end{definition}

\begin{problem} \label{prob:weq-cwf-regrouped}
 Given a category $\C$, to construct an equivalence between CwF structures on $\C$ and pairs $(X,Y)$ of an object extension structure $X = (\Ty,\compext{-}{-},\pi)$ on $\C$ and a term-structure $Y$ over $X$.
\end{problem}

\begin{construction}[\coqident{weq_cwf_cwf'_structure}]{prob:weq-cwf-regrouped} \label{constr:weq-cwf-regrouped}
  Mathematically, this is essentially trivial, as is visibly evident from the definitions: just a matter of reordering and reassociating $\Sigma$-types, and distributing $\Pi$-types over $\Sigma$-types.
  It is perhaps worth noting however that this distributivity requires functional extensionality.
\end{construction}

\begin{definition}[\coqident{qq_morphism_structure}]\label{def:cartesian.q.mors}
  Let $\C$ be a category equipped with an object extension structure $X = (\Ty,\compext{-}{-},\pi)$.
  A \emph{(split) $\q$-morphism structure} over $X$ consists of:
  \begin{enumerate}
   \item for each $f : \Gamma' \to \Gamma$ and $A : \Tymap{\Gamma}$,
          a map $\qmap{f}{A} : \compext{\Gamma'}{\reindex{f}{A}} \to \compext{\Gamma}{A}$, such that following square commutes and is a pullback
          \[
           \xymatrix@C=5pc{
                        \compext{\Gamma'}{\reindex{f}{A}} \ar[r]^{\qmap{f}{A}} \ar[d]_{\deppr{\reindex{f}{A}}} \pb &
                                                                        \compext{\Gamma}{A} \ar[d]^{\deppr{A}} \\
                        \Gamma' \ar[r]_{f}& \Gamma
           }
         \]
         and such that
  \item for each $\Gamma$ and $A : \Tymap{\Gamma}$, $\qmap{1_{\Gamma}}{A} = \compextcompare[\reindex{1_\Gamma}{A},A]\ :\ \compext{\Gamma}{\reindex{1_\Gamma}{A}} \to \compext{\Gamma}{A}$; and
   \item for each $f' : \Gamma'' \to \Gamma'$, $f : \Gamma' \to \Gamma$, and $A : \Tymap{\Gamma}$,
       \[\qmap{\compose{f'}{f}}{A}
       = \compose{\compose%
           {\compextcompare[\reindex{(\compose{f'}{f})}{A},\reindex{f'}{\reindex{f}{A}}]}%
           {\qmap{f'}{\reindex{f}{A}}}}%
           {\qmap{f}{A}}
       \ :\  \compext{\Gamma''}{\reindex{(\compose{f'}{f})}{A}} \to \compext{\Gamma}{A}.\]
     \end{enumerate}

  Here the suppressed equalities on the $\compextcompare$ terms come from the functoriality axioms of $\Ty$.
\end{definition}

\begin{problem} \label{problem:weq-typecat-regrouped}
 Given a category $\C$, to construct an equivalence between split type-category structures on $\C$ and pairs $(X,Z)$ of an object extension structure $X$ on $\C$ and a $\q$-morphism structure $Z$ over $X$.
\end{problem}

\begin{construction}[\coqident{weq_standalone_to_regrouped}]{problem:weq-typecat-regrouped} \label{constr:weq-typecat-regrouped}
  Much like Construction \ref{constr:weq-cwf-regrouped}, simply a matter of wrangling $\Pi$- and $\Sigma$-types.
\end{construction}

For most of the remainder of this section, we fix a category $\C$ and object extension structure $X$ on $\C$.
We can now explicitly define constructions going back and forth between term-structures and $\q$-morphism structures over $X$, preparatory to showing that they form an equivalence.
Before we do so, we define the \emph{compatibility} relation between them.

\begin{definition}
  Let $Y=(\Tm,\p,\te)$ be a term-structure and $Z=(\q)$ a $\q$-morphism structure over $X$.
  Say $Y$ and $Z$ are \emph{compatible} if for all $f : \Gamma \to \Gamma'$ and $A : \Tymap{\Gamma}$,
  $\te_{\reindex{f}{A}} = \reindex{\qmap{f}{A}}{\te_A}$.  
\end{definition}

\begin{problem} \label{problem:term-to-compatible-qmor}
  Given a term-structure $Y$ over $X$, to construct a $\q$-morphism structure over $X$ compatible with $Y$.
\end{problem}

\begin{construction}[\coqident{compatible_qq_from_term}]{problem:term-to-compatible-qmor}
  Given $Y$ and $f : \Gamma \to \Gamma'$ and $A : \Tymap{\Gamma}$, the term-structure axioms give a pullback square
          \[ \xymatrix@C=5pc{
                \C(\compext{\Gamma'}{\reindex{f}{A}}, \compext{\Gamma}{A})  \ar[r]^{g \mapsto \reindex{g}{\te_A}} \ar[d]_{\compose{-}{\deppr{A}}} \pb
                    & \Tmmap{\compext{\Gamma'}{\reindex{f}{A}}}  \ar[d]^{\p_{\compext{\Gamma'}{\reindex{f}{A}}}} \\
                \C(\compext{\Gamma'}{\reindex{f}{A}}, \Gamma)  \ar[r]^{g \mapsto \reindex{g}{A}}
                    & \Tymap{\compext{\Gamma'}{\reindex{f}{A}}}
           } \]
  So we may take $\qmap{f}{A}$ to be the unique map $\compext{\Gamma'}{\reindex{f}{A}} \to \compext{\Gamma}{A}$ such that $\compose{\qmap{f}{A}}{\deppr{A}} = \compose{\deppr{\reindex{f}{A}}}{f}$ and $\reindex{\qmap{f}{A}}{\te_A} = \te_{\reindex{f}{A}}$.
  Verification that this forms a compatible $\q$-morphism structure is essentially routine calculation.
\end{construction}

\begin{problem} \label{problem:qmor-to-compatible-term}
 Given a $\q$-morphism structure $Y$ over $X$, to construct a term-structure over $X$ compatible with $Y$.
\end{problem}

\begin{construction}[\coqident{compatible_term_from_qq}]{problem:qmor-to-compatible-term}
  This construction is rather more involved; we only sketch it here, and refer to the formalization for full details.

  Briefly, $\Tmmap{\Gamma}$ is the set of pairs $(A,s)$, where $A : \Tymap{\Gamma}$, and $s : \Gamma \to \compext{\Gamma}{A}$ is a section of $\deppr{A}$.
  Its functorial action $\reindex{f}{}$ involves pulling back sections along the pullback squares given by the $\q$-morphism structure.
  Finally, the universal element $\te_A : \Tmmap{\Gamma}$ is the pair $(\reindex{\deppr{A}}{A}, \diag{\deppr{A}})$, where $\diag{\deppr{A}} : \compext{\Gamma}{A} \to \compext{\compext{\Gamma}{A}}{\reindex{\deppr{A}}{A}}$ is the diagonal map of the pullback square for $A$ and $\deppr{A}$ given by the $\q$-morphism structure.
\end{construction}

\begin{problem}
  \label{problem:weq-term-qmor}
   (Assuming Univalence.)
   To give an equivalence between term-structures and $\q$-morphism structures over $X$, whose underlying functions are as given in the two preceding constructions.

   From this equivalence, to derive an equivalence between the type of pairs $(X,Y)$ of an object extension structure and a term-structure,
   and of pairs $(X,Z)$ an object extension structure and a $\q$-morphism structure.
\end{problem}

\begin{construction}[\coqident{weq_cwf'_sty'}]{problem:weq-term-qmor}\label{construction:weq-term-qmor}
  As intimated above, we proceed by showing that for each term-structure, the compatible $\q$-morphism structure constructed above is in fact the \emph{unique} compatible such structure, and vice versa.

  This equivalence immediately induces an equivalence of pair types, which is the identity on the first component carrying the object extension structure.

  Note in particular that for this result---unlike in the constructions above---we rely essentially on the univalence axiom.
\end{construction}

\begin{problem}\label{problem:equiv_sty_cwf}
   (Assuming Univalence.)
   To construct an equivalence between CwF structures and split type-category structures on a category $\C$.
\end{problem}

\begin{construction}[\coqident{weq_sty_cwf}]{problem:equiv_sty_cwf}
  By composing the equivalences of Constructions~\ref{constr:weq-cwf-regrouped}, \ref{constr:weq-typecat-regrouped}, and \ref{construction:weq-term-qmor}:
  \[
    \stype(\C) \simeq  \sum_{X : \objext(\C)} \!\!\!\! \qmor(X) \simeq  \sum_{X
      : \objext(\C)} \!\!\!\! \tmstr(X) \simeq \cwf(\C) \enspace .
    \tag*{\qedhere}
  \]
\end{construction}

The back-and-forth constructions above between CwF structures and split type-category structures (though not their compatibility) are also sketched in \cite[\textsection \textsection 3.1, 3.2]{Hofmann97syntaxand} (note that Hofmann’s \emph{categories with attributes} are what we call \emph{split type-categories}).
Hofmann also observes that---in our terminology---going from $\q$-morphism structures to term-structures and back yields the original $\q$-morphism structure.
However, working in set-theoretic foundations, the same is not true for the other direction---the original term presheaf is not recovered up to equality (see Example~\ref{ex:no-bijec-in-zf} below), but only up to isomorphism.
This where the univalence axiom buys us a little extra:
it allows us to conclude, from that isomorphism, that the recovered term presheaf really is equal to the original. Hence we obtain an equivalence of types between CwF and split type-category structures.

(Note however that our proof that the maps form an equivalence is slightly less direct,
to avoid the difficult direct construction of an identity between two term-structures.)

\begin{example} \label{ex:no-bijec-in-zf}
   Let $\C$ be the terminal category $\{ \bullet \}$, and $\Ty : \C\op \to \set$ be the terminal functor, $\Ty(\bullet) := \{ \star \}$.
   There is a unique object extension structure extending $(\C,\Ty)$.

   A term-structure on this amounts to a set $\Tmmap{\bullet}$ and element $\te_\star \in \Tmmap{\bullet}$ such that
   \[
       \begin{xy}
        \xymatrix{
                           \C(\bullet,\bullet) \ar[r]^{\yotranspose{\te_\star}} \ar[d]   & \Tmmap{\bullet} \ar[d] \\
                           \C(\bullet,\bullet)  \ar[r] & \Tymap{\bullet}
        }
       \end{xy}
   \]
   is a pullback square.
   So $\Ty(\bullet)$ must be a singleton set, and given that, the rest of the data is uniquely determined.
   
   On the other hand, to give a $\q$-morphism structure on $(\C, \Ty)$, there is no choice at all:
   all the required data consists of maps in $\C$, so is uniquely determined.

   In other words, there is exactly one $\q$-morphism structure on $(\C, \Ty)$, while term-structures on $(\C, \Ty)$ correspond precisely to singletons in $\set$.
   
   In particular, in ZF(C) or similar set-theoretic settings, with the universe taken to be some Grothendieck universe, there are many singletons in $\set$.
   So there is no bijection between term-structures and $\q$-morphism structures on $(\C,\Ty)$: Problem~\ref{problem:weq-term-qmor} is insoluble.
\end{example}

In the absence of the univalence axiom, therefore, the comparison maps between term-structures and $\q$-morphism structures may not form an equivalence of types.
However, after defining suitable notions of morphisms of those structures, they can be shown to underlie an equivalence of \emph{categories}---which, from a classical viewpoint, is how one would expect to have to compare them.
We have also formalized this equivalence of categories, and will report on it in a forthcoming article.

As mentioned in this introduction, the equivalence (in some sense) of split type-categories and categories with families is generally considered well-known in folklore; however, we are unaware of any proof or precise statement in the literature, beyond a sketch of the back-and-forth constructions in \cite[\textsection 3.2]{Hofmann97syntaxand}.
Equivalences between several related notions are spelled out carefully in \cite[Ch.~1]{blanco}, including split type-categories (there called categories with attributes) but not including CwF’s.

\section{Relative universes and a transfer construction}

In this section we introduce the notion of \emph{universes relative to a functor $J : \C \to \D$}.
This notion generalizes the universes studied in \cite[Def.~2.1]{Cfromauniverse}, and is inspired by
the generalization of monads to relative monads \cite{DBLP:journals/corr/AltenkirchCU14}.

\subsection{Relative universes and weak universes}

 \begin{definition}[\coqident{fpullback}]\label{def:rel.univ.comp.struct}
  Let $J : \C\to\D$ be a functor, and  $\p : \tU \to \U$ a morphism of $\D$.

  Given an object $X$ of $\C$ and morphism $f: J(X) \to \U$ in $\D$,
  a \emph{$J$-pullback of $\p$ along $f$} consists of
  an object $X'$ of $\C$
  and morphisms $p' : X' \to X$ and $Q : J(X') \to \tU$,
  such that the following square commutes and is a pullback:
         \[
          \xymatrix{
                **[l] J(X') \ar[r]^-{Q} \ar[d]_{J(p')} \pb & \tU \ar[d]^{\p}\\
                **[l] J(X) \ar[r]_{f}& \U
                }
  \]
\end{definition}

\begin{definition}[\coqident{relative_universe}, \coqident{weak_relative_universe}]\label{def:rel.univ.struct}
  Let $J : \C \to \D$ be a fully faithful functor.

  A \emph{$J$-universe structure} on a map $\p : \tU \to \U$ of $\D$ is a function giving,
  for each object $X$ in $\C$ and map $f: J(X) \to \U$, a $J$-pullback $(\Jpb{X}{f}, \Jpr{f}, \JQ{X}{f})$ of $\p$ along $f$.
  A \emph{universe relative to $J$}, or briefly a \emph{$J$-universe}, is a map $\p$ equipped with a $J$-universe structure.

  A \emph{weak universe relative to $J$} is a map $\p : \tU \to \U$ such that for all suitable $X$, $f$, there exists some $J$-pullback of $\p$ along $f$.
\end{definition}

A \emph{universe} in $\C$, as defined in \cite{Cfromauniverse}, is exactly a universe relative to the identity functor $\constfont{Id}_{\C}$.
We will see in Section~\ref{sec:cwf_rezk} that universes relative to the Yoneda embedding $\yofunctor[\C] : \C \to \preShv{\C}$
correspond precisely to CwF structures on $\C$.

\begin{remark}
 In the conference version \cite{alv1-csl} of this article, the condition in 
 Definition~\ref{def:rel.univ.struct} on $J : \C \to \D$ being fully faithful was not imposed.
 
 This condition
 entails that the map $(X,f) \mapsto \Jpb{X}{f}$ can be equipped with a functorial action in a suitable sense, and that 
 the map $(X,f) \mapsto \Jpr{f}$ is natural with respect to this functorial action.
 More specifically, the type of this extra ``functoriality data'' on a $J$-universe $\p$ as above is a proposition when $J$ is faithful
 (\coqident{isaprop_functorial_structure_relu}), 
 and is contractible when $J$ is fully faithful
 (\coqident{iscontr_functorial_structure_relu}).
 
 If the restriction to fully faithful $J$ is dropped, 
 then the definition of $J$-universes should probably be expanded to include the functoriality data.
 
 However, our two motivating examples, $J=\constfont{Id}_{\C}$ and $J=\yofunctor[\C]$, are both fully faithful. 
 For the sake of simplicity, therefore, we give only this ``naïve'' definition of relative universes in the present work.
\end{remark}

\begin{lemma}[\coqident{isaprop_rel_universe_structure}]\label{lem:pb.univ.isaprop}
  Suppose $J : \C \to \D$ is full and faithful, and $\C$ is univalent.
  Then for any morphism $\p : \tU \to \U$ in $\D$, and object $X : \C$ and $f: J(X) \to \U$,
  the type of $J$-pullbacks of $\p$ along $f$ is a proposition.
  Similarly, for any such $\p$, the type of $J$-universe structures on $\p$ is a proposition.
\end{lemma}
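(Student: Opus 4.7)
The plan is to reduce the second claim to the first and then prove the first via univalence. For the second claim, unfolding Definition~\ref{def:rel.univ.struct} shows that a $J$-universe structure on $\p$ is just a dependent function
\[
  \prod_{X : \C} \prod_{f : \D(J(X), \U)} \bigl(\text{$J$-pullback of $\p$ along $f$}\bigr),
\]
and propositions are closed under dependent products, so this follows formally once the first claim is established.

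For the first claim, fix $\p$, $X$, and $f$. A $J$-pullback decomposes as an underlying triple
\[
  (X', p', Q) \ : \ \sum_{X' : \C} \C(X', X) \times \D(J(X'), \tU),
\]
together with two witnesses: that the induced square commutes, and that it is a pullback in $\D$. Both witnesses live in propositions (an equation of morphisms, respectively the standard pullback predicate), so it suffices to prove the underlying triple is unique up to equality. Given two such triples $(X'_1, p'_1, Q_1)$ and $(X'_2, p'_2, Q_2)$, the universal property of pullbacks in $\D$ produces a unique isomorphism $\phi : J(X'_1) \iso J(X'_2)$ satisfying $\compose{\phi}{J(p'_2)} = J(p'_1)$ and $\compose{\phi}{Q_2} = Q_1$. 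Full-and-faithfulness of $J$ reflects $\phi$ and $\phi^{-1}$ to an isomorphism $\tilde\phi : X'_1 \iso X'_2$ in $\C$, and univalence of $\C$ converts this into an equality $e : X'_1 = X'_2$ with $\idtoiso(e) = \tilde\phi$.

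To conclude, I verify that transporting $p'_1$ and $Q_1$ along $e$ yields $p'_2$ and $Q_2$ respectively. Transport in the type-family $X' \mapsto \C(X', X)$ along $e$ acts as precomposition with $\tilde\phi^{-1}$; combined with the defining equation $\compose{\tilde\phi}{p'_2} = p'_1$ (which follows from $\compose{\phi}{J(p'_2)} = J(p'_1)$ by faithfulness), this yields $p'_2$ on the nose. For $Q_1$, transport in the type-family $X' \mapsto \D(J(X'), \tU)$ factors through $J$ applied to the induced iso, so it amounts to precomposition with $J(\tilde\phi^{-1}) = \phi^{-1}$, and $\compose{\phi}{Q_2} = Q_1$ gives the required equality.

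The only mildly delicate step is this final transport calculation for $Q_1$: one must verify that transport along an equality of objects in $\C$ through the family $X' \mapsto \D(J(X'), \tU)$ acts as precomposition with $J$ of the corresponding reflected isomorphism. Conceptually this is the crux of the argument, and it is exactly where the two hypotheses play their roles: full-and-faithfulness ensures the canonical comparison iso in $\D$ descends to $\C$, and univalence of $\C$ then promotes that iso to a path along which the remaining data can be transported.
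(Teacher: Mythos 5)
Your proof is correct and follows essentially the same route as the paper: the second claim is reduced to the first by closure of propositions under dependent products, and the first is exactly the ``pullbacks in univalent categories are unique'' argument that the paper invokes, adapted by using full-and-faithfulness of $J$ to reflect the comparison isomorphism from $\D$ to $\C$ before applying univalence. You simply spell out the transport calculations that the paper's one-line proof leaves implicit.
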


\begin{proof}
  The first statement is similar to the argument that pullbacks in univalent categories are unique.
  The second follows directly from the first.
\end{proof}

\begin{corollary}[\coqident{weq_relative_universe_weak_relative_universe}] \label{cor:weq.relu.relwku}
  When $J : \C \to \D$ is fully faithful and $\C$ is univalent, the forgetful function from universes to weak universes relative to $J$ is an equivalence.
\end{corollary}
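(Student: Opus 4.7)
The plan is to work fiberwise over the type of morphisms $\p : \tU \to \U$ in $\D$. Both $\relu(J)$ and $\relwku(J)$ are total spaces of families indexed by such maps $\p$, and the forgetful function acts as the identity on this first component. Hence it suffices to show that for every fixed $\p$, the type of $J$-universe structures on $\p$ is equivalent to the property that $\p$ is a weak $J$-relative universe, via the map that sends each chosen $J$-pullback to its mere existence.

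Unfolding Definition~\ref{def:rel.univ.struct}, the type of $J$-universe structures on $\p$ is the dependent product $\prod_{X : \C} \prod_{f : J(X) \to \U} \mathrm{fpb}(\p, X, f)$, where $\mathrm{fpb}(\p, X, f)$ denotes the type of $J$-pullbacks of $\p$ along $f$; and the weak-universe property of $\p$ is $\prod_{X : \C} \prod_{f : J(X) \to \U} \|\mathrm{fpb}(\p, X, f)\|$. By Lemma~\ref{lem:pb.univ.isaprop}, under our hypotheses on $J$ and $\C$ each $\mathrm{fpb}(\p, X, f)$ is a proposition, so the truncation unit $\mathrm{fpb}(\p, X, f) \to \|\mathrm{fpb}(\p, X, f)\|$ is an equivalence. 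Taking dependent products preserves fiberwise equivalences, yielding the required equivalence of the two $\prod$-types; and by construction this equivalence is induced by the pointwise truncation unit, which is exactly how the forgetful function acts on the second component.

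The remaining step is the standard fact that a map of $\Sigma$-types that is the identity on the first component and a fiberwise equivalence is itself an equivalence, which closes the argument. The only substantive input is Lemma~\ref{lem:pb.univ.isaprop}; everything else is definitional manipulation, so there is no real obstacle beyond that already-established propositionality.
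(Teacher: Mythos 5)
Your proof is correct and matches the paper's intended argument: the corollary is stated as a direct consequence of Lemma~\ref{lem:pb.univ.isaprop}, and your fiberwise reduction---each type of $J$-pullbacks is a proposition, so the truncation unit is an equivalence, hence so is the induced map of $\Pi$-types and then of $\Sigma$-types over $\p$---is exactly the route the paper (and formalization) takes. No gaps.
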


\subsection{Transfer constructions}

We give three constructions for transferring (weak) relative universes from one functor to another.
The first, with all data assumed to be given explicitly, is the most straightforward.
However, it does not suffice for transfers to the Rezk completion, since the embedding $\eta_{\C} : \C \to \RC{\C}$ is only a weak equivalence, not in general \emph{split} essentially surjective.
The second and third constructions are therefore adaptations of the first to require only essential surjectivity.

\begin{problem}\label{problem:universe.transfer.concrete}
  Given a square of functors commuting up to natural isomorphism, as in
  \[
      \begin{xy}
        \xymatrix{
                 \C \drtwocell<\omit>{\alpha} \ar[r]^{J} \ar[d]_{R}& \D \ar[d]^{S}\\
                 \C' \ar[r]_{J'} & \D',
        }
      \end{xy}
  \]
  such that
  \begin{itemize}
     \item $J$ and $J'$ are fully faithful,
     \item $S$ preserves pullbacks,
     \item $S$ is split full, and
     \item $R$ is split essentially surjective,
   \end{itemize}
   and given a $J$-relative universe structure on a map $\p : \tU \to \U$ in $\D$,
   to construct a $J'$-universe structure on $S(\p)$ in $\D'$.
\end{problem}

\begin{construction}[\coqident{rel_universe_structure_induced_with_ess_split}]{problem:universe.transfer.concrete} \label{constr:universe.transfer.concrete}
  Given $X$ in $\C'$ and $f : J'(X) \to S\U$, we need to construct a $J'$-pullback of $S(\p)$ along $f$.

  Split essential surjectivity of $R$ gives some $\bar{X} : \C$ and isomorphism $i : R(\bar{X}) \iso X$.
  We therefore have $\compose{\alpha_{\bar{X}}}{\compose{J'i}{f}} : SJ\bar{X} \to S\U$; so split fullness of $S$ gives us some $\bar{f} : J\bar{X} \to \U$ with $S \bar{f} = \compose{\alpha_{\bar{X}}}{\compose{J'i}{f}}$.
  Taking the given $J$-pullback of $\p$ along $\bar{f}$ and mapping it forward under $S$, we get a pullback square in $\D'$:
  \[
    \xymatrix@C=3.5pc{
      SJ(\Jpb{\bar{X}}{\bar{f}}) \ar[rrr]^{S(\JQ{\bar{X}}{\bar{f}})} \ar[d]_{SJ\Jpr{\bar{f}}} \pb
      & & & S \tU \ar[d]^{S \p}
      \\
      S J \bar{X} \ar[r]^{\alpha_{\bar{X}}}
      & J' R \bar{X} \ar[r]^{J'i}
      & J' X \ar[r]^f
      & S \U
    }
  \]

  The maps $\compose{R(\Jpr{\bar{f}})}{i} : R(\Jpb{\bar{X}}{\bar{f}}) \to X$ and $\compose{\alpha_{\Jpb{\bar{X}}{\bar{f}}}^{-1}}{S(\JQ{\bar{X}}{\bar{f}})} : J'R(\Jpb{\bar{X}}{\bar{f}}) \to S \tU$ now give the desired $J'$-pullback of $S\p$ along $f$, since the square they give---the right-hand square below---is isomorphic to the pullback obtained above.
  %
  \[ \begin{gathered}[b] \xymatrix@C=3pc{
      SJ(\Jpb{\bar{X}}{\bar{f}}) \ar[r]^{\alpha_{\Jpb{\bar{X}}{\bar{f}}}}_{\iso} \ar[d]_{SJ\Jpr{\bar{f}}}
      & J'R(\Jpb{\bar{X}}{\bar{f}}) \ar[r]^{1}_{\iso} \ar[d]^{J'R\Jpr{\bar{f}}}
      & J'R(\Jpb{\bar{X}}{\bar{f}}) \ar[rr]^{\compose{\alpha_{\Jpb{\bar{X}}{\bar{f}}}^{-1}}{S(\JQ{\bar{X}}{\bar{f}})}} \ar[d]^{J' (\compose{R(\Jpr{\bar{f}})}{i})} \pb
      & & S \tU \ar[d]^{S \p}
      \\
      S J \bar{X} \ar[r]^{\alpha_{\bar{X}}}_{\iso}
      & J' R \bar{X} \ar[r]^{J'i}_{\iso}
      & J' X \ar[rr]^f
      & & S \U
  } \\[-\dp\strutbox] \end{gathered} \qedhere \]
\end{construction}

\begin{problem}\label{problem:universe.struct.on}
  Given a square of functors as in Problem~\ref{problem:universe.transfer.concrete}, such that
  \begin{itemize}
     \item $J$ and $J'$ are fully faithful,
     \item $S$ preserves pullbacks,
     \item $S$ is full,
     \item $R$ is essentially surjective,
     \item $\C'$ is univalent,
   \end{itemize}
   and given a $J$-relative universe structure on a map $\p : \tU \to \U$ in $\D$,
   to construct a $J'$-universe structure on $S(\p)$ in $\D'$.
\end{problem}

\begin{construction}[\coqident{rel_universe_structure_induced_with_ess_surj}]{problem:universe.struct.on}\label{constr:universe.struct.on}
  Mostly the same as Construction~\ref{constr:universe.transfer.concrete}.
  The only problematic steps are finding $(\bar{X},i)$ and $\bar{f}$ as above, since the hypotheses which provided them there have now been weakened to existence properties.
  However, our new hypotheses that $\C'$ is univalent and $J'$ fully faithful allow us to apply Lemma~\ref{lem:pb.univ.isaprop} to see that the goal of a $J'$-pullback is a proposition; so the existence properties do in fact suffice.
\end{construction}

\begin{lemma}[\coqident{is_universe_transfer}]\label{lemma:transfer_relwku}
  Suppose given a square of functors as in Problem~\ref{problem:universe.transfer.concrete}, such that
  \begin{itemize}
     \item $J$ and $J'$ are fully faithful,
     \item $S$ preserves pullbacks,
     \item $S$ is full,
     \item $R$ is essentially surjective.
   \end{itemize}
   If $\p : \tU \to \U$ in $\D$ is a weak $J$-relative universe in $\C$,
   then $S(\p)$ is a weak $J'$-relative universe in $\D'$.
\end{lemma}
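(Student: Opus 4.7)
The plan is to adapt Construction~\ref{constr:universe.transfer.concrete} by working under propositional truncation throughout, exploiting the fact that the goal---mere existence of a $J'$-pullback---is itself a proposition, so the standard elimination rule for propositional truncation applies.

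Concretely, fix $X : \C'$ and $f : J'(X) \to S\U$; we must show that there merely exists a $J'$-pullback of $S(\p)$ along $f$. Since this goal is a proposition, I may successively assume representatives of each of the relevant existence hypotheses. First, essential surjectivity of $R$ gives (up to propositional truncation) some $\bar X : \C$ together with an isomorphism $i : R(\bar X) \iso X$. Second, with $\bar X$ in hand, the composite $\compose{\alpha_{\bar X}}{\compose{J' i}{f}} : SJ\bar X \to S\U$ lies in the image of $S$ by fullness, so I may extract some $\bar f : J\bar X \to \U$ with $S\bar f = \compose{\alpha_{\bar X}}{\compose{J' i}{f}}$. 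Third, since $\p$ is a weak $J$-relative universe, there merely exists a $J$-pullback $(\Jpb{\bar X}{\bar f}, \Jpr{\bar f}, \JQ{\bar X}{\bar f})$ of $\p$ along $\bar f$, and I extract one.

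From this data the remainder of Construction~\ref{constr:universe.transfer.concrete} produces an actual $J'$-pullback of $S(\p)$ along $f$: applying $S$ to the chosen $J$-pullback and using that $S$ preserves pullbacks yields a pullback square in $\D'$, which is then shown to be isomorphic to the desired square with legs $\compose{R(\Jpr{\bar f})}{i}$ and $\compose{\alpha_{\Jpb{\bar X}{\bar f}}^{-1}}{S(\JQ{\bar X}{\bar f})}$, via the same diagram as in the earlier construction. Truncating gives the required mere existence.

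I do not expect any real obstacle beyond bookkeeping: the only substantive input beyond Construction~\ref{constr:universe.transfer.concrete} is the observation that the output type (existence of a $J'$-pullback) is a proposition, so the chain of propositional truncation eliminations at the beginning is legitimate even without the stronger splitness assumptions. Note in particular that, unlike in Construction~\ref{constr:universe.struct.on}, we do not need Lemma~\ref{lem:pb.univ.isaprop} (and hence neither univalence of $\C'$ nor full faithfulness of $J'$), because here we are not constructing a chosen structure but only a propositionally truncated existence claim, which is trivially a proposition.
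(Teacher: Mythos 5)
Your proposal is correct and matches the paper's own argument: the paper likewise runs Construction~\ref{constr:universe.transfer.concrete} again, noting that the goal of mere existence of a $J'$-pullback is a proposition, so witnesses may be extracted from the existence-only hypotheses (essential surjectivity of $R$, fullness of $S$, weakness of the universe) exactly as you describe. Your closing remark that neither Lemma~\ref{lem:pb.univ.isaprop} nor univalence of $\C'$ is needed here is also consistent with the lemma's hypotheses.
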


\begin{proof}
  Again, mostly the same as Construction~\ref{constr:universe.transfer.concrete}.
  Now our goal is just to show, for each suitable $X$, $f$, that there exists some $J'$-pullback of $S\p$ along $f$.
  This is by construction a proposition; so, again, we can obtain objects from our existence hypotheses whenever required.
\end{proof}

\begin{lemma}[\coqident{isweq_is_universe_transfer}]\label{lemma:transfer_relwku_equiv}
  Given functors satisfying the hypotheses of Lemma~\ref{lemma:transfer_relwku}, if additionally $R$ is full and $S$ is faithful,
  then a map $\p : \tU \to \U$ in $\D$ is a weak $J$-relative universe if and only if $S(\p)$ is a weak $J'$-relative universe.
\end{lemma}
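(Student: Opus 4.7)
The implication from ``$\p$ a weak $J$-relative universe'' to ``$S(\p)$ a weak $J'$-relative universe'' is Lemma~\ref{lemma:transfer_relwku}. My plan is to prove the converse, roughly by inverting Construction~\ref{constr:universe.transfer.concrete} using the new hypotheses (fullness of $R$ and faithfulness of $S$) in place of split ess.\ surj./split fullness.

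Suppose $S(\p)$ is a weak $J'$-relative universe, and fix $X : \C$ and $f : JX \to \U$; we must exhibit some $J$-pullback of $\p$ along $f$. Since this goal is a proposition, we may freely use existence hypotheses throughout. First apply the weak $J'$-universe structure on $S(\p)$ at $\compose{\alpha_X^{-1}}{Sf} : J'RX \to S\U$ to obtain a $J'$-pullback $(P', q', Q')$. Essential surjectivity of $R$ then gives some $P : \C$ with isomorphism $j : RP \iso P'$; fullness of $R$ lifts $\compose{j}{q'} : RP \to RX$ to some $p : P \to X$ in $\C$; and fullness of $S$ lifts the composite $\compose{\alpha_P}{\compose{J'j}{Q'}} : SJP \to S\tU$ to some $Q : JP \to \tU$ in $\D$.

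It remains to verify that $(P, p, Q)$ is a $J$-pullback of $\p$ along $f$. Commutativity of the outer square reduces, via faithfulness of $S$, to a routine diagram chase using naturality of $\alpha$ and the known commutativity downstairs. For the universal property, given any competing cone $(T, g_1, g_2)$ in $\D$, I would apply $S$ and re-route by $\alpha_X$ to obtain a cone for $(P', q', Q')$ in $\D'$; extract the unique mediator $k : ST \to J'P'$; and then use fullness of $S$ to lift the composite $\compose{k}{\compose{J'j^{-1}}{\alpha_P^{-1}}}$ to some $h : T \to JP$. The required identities $\compose{h}{Jp} = g_1$ and $\compose{h}{Q} = g_2$ follow by faithfulness of $S$ after another naturality chase, and uniqueness of $h$ follows from uniqueness of $k$ together with faithfulness of $S$ (using that $\compose{\alpha_P}{J'j}$ is an iso).

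The main obstacle is the pullback verification: $S$ is not assumed to reflect pullbacks, so we cannot simply argue ``$S$ of the square is a pullback, hence so is the square.'' Instead, the pair fullness-plus-faithfulness on $S$ is exactly the right tool to transfer both the existence and the uniqueness parts of the universal property back by hand. This is precisely why these extra hypotheses are needed in this direction, whereas Lemma~\ref{lemma:transfer_relwku} required only fullness.
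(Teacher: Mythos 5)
Your construction of the candidate pullback is exactly the paper's: take a $J'$-pullback of $S\p$ along $\compose{\alpha_X^{-1}}{Sf}$, rectify its vertex by essential surjectivity of $R$, lift the projection by fullness of $R$, and lift the map into $S\tU$ by fullness of $S$; and your hand verification of the universal property is correct. The one place you diverge is your closing claim that ``$S$ is not assumed to reflect pullbacks, so we cannot simply argue that $S$ of the square is a pullback, hence so is the square.'' That is mistaken: under the combined hypotheses $S$ is full (from Lemma~\ref{lemma:transfer_relwku}) and faithful (the new assumption), hence fully faithful, and fully faithful functors reflect limits; this is precisely the shortcut the paper takes (``$S$ reflects limits, as it is full and faithful''), after observing that the image of your square is isomorphic to the chosen $J'$-pullback square and hence itself a pullback. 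Your explicit chase---push a competing cone down with $S$ and $\alpha$, take the unique mediator, lift it by fullness, and check the two equations and uniqueness by faithfulness---is nothing more than an inlined proof of that reflection fact combined with iso-invariance of pullbacks, so your argument is sound; it is just longer than necessary, and the extra hypotheses are needed for exactly the reason the paper states, not for the reason you give.
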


\begin{proof}
  One implication is exactly Lemma~\ref{lemma:transfer_relwku}.  For the other, assume that $S(\p)$ is a weak $J'$-relative universe; we must show that $\p$ is a weak $J$-relative universe.

  Given $X : \C$ and $f : J(X) \to \U$, we need to show there exists a $J$-pullback for $\p$ along $f$.
  Since the goal is just existence, we can take witnesses for existence hypotheses as needed.
  In particular, we can take some $X' : \C'$, $p' : X' \to RX$ and $Q : J'X' \to S\tU$ forming a $J$-pullback for $S\p$ along $\compose{\alpha^{-1}_{X}}{Sf} : J'RX \to S\U$.

  By essential surjectivity of  $R$, we can find some $\bar{X}' : \C$ and isomorphism $i : R\bar{X}' \iso X'$;
  and similarly by its fullness, we can take some arrow $\bar{p}' : \bar{X}' \to X$ such that $R(\bar{p}') = \compose{i}{p'}$.
  Finally, fullness of $S$ gives a map $\bar{Q} : J \bar{X}' \to \tU$ with $S\bar{Q} = \compose{\alpha_{\bar{X}'}}{\compose{J'i}{Q}}$.

  But now $(\bar{X}',\bar{p}',\bar{Q})$ form a $J$-pullback for $\p$ along  $f$, since under $S$, the square they give becomes isomorphic to the original $J'$-pullback square of $(X',p',Q)$, and $S$ reflects limits, as it is full and faithful.
  \[ \begin{gathered}[b] \xymatrix@C=3pc{
      SJ\bar{X}' \ar[r]^{\alpha_{\bar{X}'}}_{\iso} \ar[d]_{SJ{\bar{p}'}}
      & J'R\bar{X}' \ar[r]^{J'i}_{\iso} \ar[d]_{J'R\bar{p}'}
      & J'X' \ar[r]^Q \ar[d]^{J' p'} \pb
      & S \tU \ar[d]^{S \p}
      \\
      S J \bar{X} \ar[r]^{\alpha_{X}}_{\iso}
      & J' R \bar{X} \ar[r]^{1}_{\iso}
      & J' X \ar[r]^{\compose{\alpha^{-1}_{X}}{Sf}}
      & S \U
  } \\[-\dp\strutbox] \end{gathered} \qedhere \]
\end{proof}

\begin{problem} \label{problem:weq.wkrelu.transfer}
  Given a square of functors as in Problem~\ref{problem:universe.transfer.concrete}, such that
  \begin{itemize}
  \item $J$ and $J'$ are fully faithful,
  \item $\D$ and $\D'$ are both univalent,
  \item $S$ is an equivalence, and
  \item $R$ is essentially surjective and full,
  \end{itemize}
  to construct an equivalance between weak $J$-relative universes and weak $J'$-relative universes.
\end{problem}

\begin{construction}[\coqident{weq_weak_relative_universe_transfer}]{problem:weq.wkrelu.transfer} \label{constr:weq.wkrelu.transfer}
  The equivalence $S$ of univalent categories induces an equivalence between morphisms in $\D$ and in $\D'$.
  Lemma~\ref{lemma:transfer_relwku_equiv} implies that this restricts to an equivalence between weak relative universes as desired.
\end{construction}

\section{CwF structures, representable maps of presheaves and the Rezk completion}\label{sec:cwf_rezk}

In this section, we show that CwF structures can be seen as relative universes, and hence apply the results of the previous section to transfer CwF structures along weak equivalences of categories: in particular, from a category to its Rezk completion.

We also consider \emph{representable maps of presheaves}, corresponding similarly to relative \emph{weak} universes, and use the results on relative universes to elucidate their relationship to CwF structures.

The resulting transfers and relationships are summed up in the following diagram, whose vertical maps all simply forget chosen structure:
\[
  \begin{xy}
   \xymatrix{
                 \cwf(\C) \ar[d] \ar[r]^{\simeq}
                 & \relu(\yofunctor[\C])  \ar[r] \ar[d]
                 & \relu(\yofunctor[\RC{\C}])\ar[r]^{\simeq} \ar[d]^{\simeq}
                 & \cwf(\RC{\C}) \ar[d]^{\simeq}
                 \\
                 \rep(\C) \ar[r]^{\simeq}
                 & \relwku(\yofunctor[\C]) \ar[r]^{\simeq}
                 & \relwku(\yofunctor[\RC{\C}])    \ar[r]^{\simeq}
                 &  \rep (\RC{\C})
   }
  \end{xy}
\]

\subsection{Representable maps of presheaves}

\begin{definition}[\coqident{rep_map}] \label{def:rep_map}
 A map $\p : \Tm \to \Ty$ of presheaves on a category $\C$ is \emph{representable} if for each for each $\Gamma : \C$ and $A : \Tymap{\Gamma}$, there exists some representation of the fiber of $\p$ over $A$, i.e.\ some $\deppr{A} : \compext{\Gamma}{A} \to \Gamma$ and $\te_A \in \Tmmap{\compext{\Gamma}{A}}$ satisfying the conditions of Definition~\ref{def:cwf.fiore}, item \ref{item:cwf.fiber-rep}.%
 \footnote{Note that in \cite{awodey_natural_published}, Awodey takes \emph{representable map} to mean what we call a CwF structure, i.e.\ to include choices of representations of the fibers; cf.\ \cite[\textsection 1.1, Algebraic character]{awodey_natural_published}.}

  In other words, a representable map of presheaves on $\C$ is just like a CwF structure, except that the representations are merely assumed to exist, not included as chosen data.
\end{definition}

Evidently, the underlying map $\p : \Tm \to \Ty$ of any CwF structure is representable, just by forgetting its chosen representations.
This can sometimes be reversed:

\begin{lemma}[\coqident{isweq_from_cwf_to_rep}]\label{lemma:map_cwf_rep_equiv}
  If $\C$ is a univalent category, then the forgetful map from CwF structures to representable maps of presheaves on $\C$ is an equivalence.
  That is, any representable map of presheaves on $\C$ carries a unique choice of representing data.
\end{lemma}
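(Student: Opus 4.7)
The plan is to reduce the lemma to Lemma~\ref{lem:pb.univ.isaprop}, using that CwF structures and representable maps of presheaves share the same underlying datum of a morphism of presheaves $\p : \Tm \to \Ty$, and differ only in whether chosen or merely existing fibre representations are specified.

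First, I would observe that the forgetful map preserves $(\Ty, \Tm, \p)$, so it is the total map of a fibrewise map over the type of such $\p$. It therefore suffices to show, for each fixed such $\p$, that the fibre is an equivalence. For fixed $\p$, the fibre in $\cwf(\C)$ is $\prod_{\Gamma : \C,\, A : \Tymap{\Gamma}} \mathrm{Rep}(\p,\Gamma,A)$, where $\mathrm{Rep}(\p,\Gamma,A)$ denotes the type of representations of the fibre of $\p$ over $A$ as in item~\ref{item:cwf.fiber-rep} of Def.~\ref{def:cwf.fiore}; and the fibre in $\rep(\C)$ is $\prod_{\Gamma,A} \|\mathrm{Rep}(\p,\Gamma,A)\|$. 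The forgetful fibrewise map is componentwise the unit of propositional truncation, which is an equivalence as soon as each $\mathrm{Rep}(\p,\Gamma,A)$ is a proposition.

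The key step is therefore to show that $\mathrm{Rep}(\p,\Gamma,A)$ is a proposition when $\C$ is univalent. Unpacking the definition, a representation consists of an object $\compext{\Gamma}{A} : \C$, a morphism $\deppr{A} : \compext{\Gamma}{A} \to \Gamma$, and an element $\te_A : \Tmmap{\compext{\Gamma}{A}}$, satisfying the compatibility $\p(\te_A) = \reindex{\deppr{A}}{A}$ and making the Yoneda-transposed square a pullback in $\preShv{\C}$. Via the Yoneda bijection between elements of a presheaf $\Tm$ at $X'$ and natural transformations $\yofunctor[\C](X') \to \Tm$, this is precisely the data of a $\yofunctor[\C]$-pullback of $\p$ along $\yotranspose{A}$ in the sense of Definition~\ref{def:rel.univ.comp.struct}, with $\p$ regarded as a morphism in $\preShv{\C}$. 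Since the Yoneda embedding is fully faithful and $\C$ is univalent, Lemma~\ref{lem:pb.univ.isaprop} applies, and the type of such $\yofunctor[\C]$-pullbacks is a proposition; hence so is $\mathrm{Rep}(\p,\Gamma,A)$.

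The main obstacle is the careful but essentially routine bookkeeping needed to make the identification between $\mathrm{Rep}(\p,\Gamma,A)$ and the type of $\yofunctor[\C]$-pullbacks explicit: one must reassociate the $\Sigma$-types in Def.~\ref{def:cwf.fiore}(\ref{item:cwf.fiber-rep}) and invoke Yoneda on the component carrying $\te_A$, checking that the pullback conditions transport correctly. Once this identification is in hand, the proof concludes immediately: propositionality gives $\mathrm{Rep}(\p,\Gamma,A) \simeq \|\mathrm{Rep}(\p,\Gamma,A)\|$, which lifts fibrewise to the desired equivalence of products, and the uniqueness statement in the lemma is then just the expression of this fibrewise propositionality.
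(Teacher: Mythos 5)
Your proof is correct, and the key step is sound: the type $\mathrm{Rep}(\p,\Gamma,A)$ of representations of the fiber is indeed, via Yoneda transposition, equivalent to the type of $\yofunctor[\C]$-pullbacks of $\p$ along $\yotranspose{A}$, so Lemma~\ref{lem:pb.univ.isaprop} (Yoneda being fully faithful, $\C$ univalent) gives propositionality, and your fibrewise reduction over the shared datum $\p$ then yields the equivalence, since the truncation unit is an equivalence on propositions. The paper's own proof takes a more direct route: it argues on the spot that representing data is unique up to isomorphism (pullbacks are unique up to isomorphism and $\yofunctor[\C]$ is fully faithful) and that univalence of $\C$ upgrades this to literal uniqueness, without routing through the relative-universe vocabulary; it then remarks parenthetically that the result can alternatively be seen as a special case of Corollary~\ref{cor:weq.relu.relwku} via the equivalences of Constructions~\ref{construction:equiv_cwf_relu} and~\ref{construction:equiv_rep_relwku}. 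Your argument sits between these: you reuse Lemma~\ref{lem:pb.univ.isaprop} pointwise (so the iso-uniqueness-plus-univalence argument is inherited rather than redone), at the cost of the Yoneda bookkeeping identifying fiber representations with relative pullbacks --- essentially the same bookkeeping the paper defers to Construction~\ref{construction:equiv_cwf_relu} --- but you avoid invoking the global equivalences with $\relu(\yofunctor[\C])$ and $\relwku(\yofunctor[\C])$ wholesale. The underlying mathematics is the same in all three packagings; yours is a perfectly good, and arguably more modular, way to organize it.
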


\begin{proof}
  If suffices to show that for a given map $\p : \Tm \to \Ty$, and for any $\Gamma : \C$ and $A : \Tymap{\Gamma}$, representing data $(\compext{\Gamma}{A},\deppr{A},\te_A)$ for the fiber is unique if it exists.

  Such data is always unique up to isomorphism, for any category $\C$, since pullbacks are unique up to isomorphism and $\yofunctor[\C]$ is full and faithful.
  But when $\C$ is univalent, this uniqueness up to isomorphism can be translated into literal uniqueness, as required.

  (Alternatively, following Constructions \ref{construction:equiv_cwf_relu} and \ref{construction:equiv_rep_relwku} below, we could see this result as essentially a special case of Corollary~\ref{cor:weq.relu.relwku} on universe structures.)
\end{proof}

\subsection{Transfer of CwF structures and representable maps of presheaves}

In order to apply the transfer results of the previous section, we first establish the equivalences between CwF structures on a category $\C$ (resp.\ representable maps of presheaves) and relative (weak) universes on $\yofunctor[\C]$.

\begin{problem}\label{problem:equiv_cwf_relu}
  Given a category $\C$, to construct an equivalence between $\cwf(\C)$ and $\relu(\yofunctor[\C])$.
\end{problem}

\begin{construction}[\coqident{weq_cwf_structure_RelUnivYo}]{problem:equiv_cwf_relu}\label{construction:equiv_cwf_relu}
  This is a matter of reassociating components, and replacing two quantifications over elements of a presheaf---once
  over $\Tymap{\Gamma}$, once over $\Tmmap{\compext{\Gamma}{A}}$---by quantification over the respective isomorphic sets
  of natural transformations into $\Ty$ and $\Tm$.
\end{construction}

\begin{problem}\label{problem:equiv_rep_relwku}
  Given a category $\C$, to construct an equivalence between $\rep(\C)$ and $\relwku(\yofunctor[\C])$.
\end{problem}

\begin{construction}[\coqident{weq_rep_map_weakRelUnivYo}]{problem:equiv_rep_relwku} \label{construction:equiv_rep_relwku}
  Similar to Construction~\ref{construction:equiv_cwf_relu}.
\end{construction}

Next, we make use of this to transfer CwF structures and representable maps along weak equivalences,
by viewing them as relative (weak) universes and applying the transfer results for those.

\begin{problem}\label{problem:transfer_cwf}
 Given a weak equivalence $F : \C \to \D$, where $\D$ is univalent, to construct a map $\cwf(\C) \to \cwf(\D)$.
\end{problem}

\begin{construction}[\coqident{transfer_cwf_weak_equivalence}]{problem:transfer_cwf}\label{construction:transfer_cwf}
  We construct a map $\relu(\yofunctor[\C]) \to \relu(\yofunctor[\D])$ as an instance of Construction~\ref{constr:universe.struct.on} and obtain the desired map by composition with the equivalence
  to CwF structures
  of Construction~\ref{construction:equiv_cwf_relu}.
  Consider the diagram
  \[
        \xymatrix@R=3pc@C=4pc{
                 \C \drtwocell<\omit>{\alpha} \ar[r]^-{\yofunctor[\C]} \ar[d]_{F}
                        & \preShv{\C} \ar@/_/[d]_{\ES}
                 \\ \D \ar[r]_-{\yofunctor[\D]}
                        & \preShv{\D} \ar@/_/[u]^{\simeq}_{F^{\circ}}
        }
  \]
  Here, the functor $F^{\circ}$ given by precomposition with $F\op$ is a weak equivalence between univalent categories, and hence a
  strong equivalence with inverse $\ES$.
  The isomorphism $\alpha$ is constructed as follows:
  Note that fully faithful functors reflect isomorphisms; we apply this for the functor $F^{\circ}$ of precomposition with $F\op$.
  It hence suffices to construct a natural isomorphism from  $\compose{\yofunctor[\C]}{\compose{\ES}{F^{\circ}}} \simeq \yofunctor[\C]$
     to $\compose{F}{\compose{\yofunctor[\D]}{F^{\circ}}}$.
   But this is an instance of a general isomorphism:
   indeed, for any functor $G : \A \to \X$, we have natural transformation
   from $\yofunctor[\A]$ to $\compose{G}{\compose{\yofunctor[\X]}{G^{\circ}}}$,
   and this natural transformation is an isomorphism when $G$ is fully faithful.
   This ends the construction of the natural isomorphism $\alpha$.
   The functor $\ES$ preserves pullbacks since it is fully faithful and essentially surjective.
   The hypotheses of Problem~\ref{problem:universe.struct.on} are easily checked,
   hence Construction~\ref{constr:universe.struct.on} applies.
\end{construction}

\begin{problem}\label{problem:transfer_rep}
 Given a weak equivalence $F : \C \to \D$, to construct an equivalence $\rep(\C) \simeq \rep(\D)$.
\end{problem}

\begin{construction}[\coqident{transfer_rep_map_weak_equivalence}]{problem:transfer_rep}\label{construction:transfer_rep}
   A direct instance of Construction \ref{constr:weq.wkrelu.transfer}.
\end{construction}

Putting everything together, we obtain:

\begin{problem}\label{problem:weq_cwf_reprc}
  For any category $\C$, to construct an equivalence between representable maps on $\C$ and CwF structures on $\RC{\C}$.
\end{problem}

\begin{construction}[\coqident{weq_rep_map_cwf_Rezk}]{problem:weq_cwf_reprc}
  Construction~\ref{construction:transfer_rep}, applied to $\eta_{\C}$, gives us an equivalence $\rep(\C) \simeq \rep(\RC{\C})$.
  On the other hand, Lemma~\ref{lemma:map_cwf_rep_equiv} tells us that $\cwf(\RC{\C}) \simeq \rep(\RC{\C})$.
  Composing the first of these with the inverse of the second yields the desired equivalence.
\end{construction}

\section{Guide to the accompanying formalization}\label{sec:formalization}

In this section we give some details on the formalization of the material presented here.

\subsection{Material formalized for this article}
All constructions and theorems of this work have been formalized in the proof assistant Coq,
over the \UniMath library of univalent mathematics \cite{UniMath,UniMath2015}.
We rely particularly heavily on \UniMath's category theory library.

Our formalization can be found at \url{https://github.com/UniMath/TypeTheory}.
The main library will continue development, so naming, organization, etc.\ may change from what is presented here.

However, the version described in the present article will remain permanently available under the tag \nolinkurl{2017-ALV1-arxiv},
and the file \nolinkurl{Articles/ALV_2017.v} will be maintained to keep the main results of this article available and locatable over future versions of the library.

For the reader interested in exploring the formalization, we recommend starting with that file, and following backwards to find the details of definitions and constructions.
A browsable version is available at \url{https://unimath.github.io/TypeTheory/coqdoc/master/TypeTheory.Articles.ALV_2017.html}.

The specific material of the present article amounts to about 3700 lines of code in the formalization.
Additionally, this development required formalizing a further c.~1500 lines of general background material (mostly on category theory) that had not previously been given.

\subsection{Type theory of the formalization}

There are some subtleties in how the type theory of the formalization relates to what we set out in Section~\ref{sec:background-type-theory}.

Firstly, the type theory of Coq includes many powerful features; like \UniMath itself, we deliberately avoid most of these, staying within the fragment described in \cite[\textsection 1]{UniMath2015}.

Secondly, functional extensionality and univalence are provided by \UniMath as axioms---that is, as assumptions added to the global context.
We use functional extensionality freely, but do not use univalence except where explicitly required, as noted in the text.

Thirdly, in order to acquire resizing principles (not otherwise available in Coq), \UniMath uses type-in-type, and hence is in principle inconsistent.
\UniMath itself is careful not to use any consequences of type-in-type except for the resulting resizing principles.
We are even more restricted: we do not make direct or essential use of the resizing principles.
We do depend on them indirectly, since propositional truncation is implemented in \UniMath using resizing principles.
However, our use of truncation is always via the interface corresponding to its standalone axiomatization, as assumed in Section~\ref{sec:background-type-theory}.

\section{Summary and future work}

The above sections complete the construction of the maps and equivalences of types promised in the introduction: in particular,
\begin{itemize}
\item equivalence between split type-category structures and CwF structures;
\item equivalence between CwF structures and universes relative to the Yoneda embedding, and similarly between representable maps and weak such relative universes;
\item transfer of CwF structures and representable maps to the Rezk completion;
\item equivalence between CwF structures on a category $\C$ and representable maps of presheaves on its Rezk completion.
\end{itemize}

There are several natural interesting directions for further work:

\begin{itemize}
\item Define \emph{categories} of all the various structures considered here; show that the comparison constructions given here are all moreover functorial, and that our equivalences of types underlie equivalences of categories.
  Besides its intrinsic interest, this would make more of our constructions meaningful and useful in the classical setting.

\item Extend these constructions to give comparisons with other categorical structures considered for similar purposes in the literature: Dybjer’s original CwF’s; Cartmell’s contextual categories/C-systems; comprehension categories; categories with display maps\ldots

\item Understand further the transfer of CwF structures along the Rezk completion construction: does the result enjoy an analogous universal property, making it the ‘free univalent category with families’ on a category $\C$?
\end{itemize}

Progress in some of these directions may be already found in our formalization, though not included in the present article.

\subsection*{Acknowledgements.}
  \thegrants

\bibliographystyle{alpha}
\bibliography{literature}

\end{document}